\newcommand{\be}{\begin{equation}}
\newcommand{\ee}{\end{equation}}
\newcommand{\n}{{\bf n}}
\newcommand{\x}{{\bf x}}
\newcommand{\y}{{\bf y}}
\newcommand{\avar}{{\bf a}}
\newcommand{\e}{{\bf e}}
\newcommand{\V}{{\bf V}}
\newcommand{\X}{{\bf X}}
\newcommand{\R}{\mathbb{R}}
\newcommand{\Z}{\mathbb{Z}}
\newcommand{\T}{\mathbb{T}}
\newcommand{\Lvar}{\mathcal{L}}
\newcommand{\I}{\mathcal{I}}
\title{A Semi-Lagrangian Computation of Front Speeds of G-equation in ABC and Kolmogorov Flows with Estimation via Ballistic Orbits
\thanks{Submitted to the editors DATE.
\funding{The first author is partially supported by NCKU graduate fellowship. The second author is partially supported by MOST grant of Taiwan 104-2115-M-006-013-. The third author is partially supported by NSF grants DMS-1924548 and DMS-1952644.}}}
\author{Chou Kao
\thanks{Department of Mathematics,
National Cheng-Kung University, Tainan 70101, Taiwan
(\email{L18101030@gs.ncku.edu.tw}).}
\and Yu-Yu Liu
\thanks{Corresponding author. Department of Mathematics,
National Cheng-Kung University, Tainan 70101, Taiwan
(\email{yuyul@ncku.edu.tw}).}
\and Jack Xin
\thanks{Department of Mathematics,
University of California, Irvine, Irvine, CA 92697, USA
(\email{jxin@math.uci.edu}).}}
\begin{document}

\maketitle

\begin{abstract}
The Arnold-Beltrami-Childress (ABC) flow and the Kolmogorov flow are three dimensional periodic divergence free velocity fields that exhibit chaotic streamlines. We are interested in front speed enhancement in G-equation of turbulent combustion by large intensity ABC and Kolmogorov flows. We give a quantitative construction of the ballistic orbits of ABC and Kolmogorov flows, namely those with maximal large time asymptotic speeds in a coordinate direction. Thanks to the optimal control theory of G-equation (a convex but non-coercive Hamilton-Jacobi equation), the ballistic orbits serve as admissible trajectories for front speed estimates. To study the tightness of the estimates, we compute the front speeds of G-equation based on a semi-Lagrangian (SL) scheme with Strang splitting and weighted essentially non-oscillatory (WENO) interpolation. Time step size is chosen so that the Courant number grows sublinearly with the flow intensity. Numerical results show that the front speed growth rate in terms of the flow intensity may approach the analytical bounds from the ballistic orbits.
\end{abstract}

\begin{keywords}
Chaotic flows, Ballistic orbits, Front speeds, G-equation
\end{keywords}

\begin{AMS}
34C25, 65M25, 70H20, 76F25
\end{AMS}

\section{Introduction}
The study of transport phenomena in three dimensional fluid flows is a challenging problem, due in part to the presence of chaos and the high computational costs in resolving small scales, \cite{W85,MK99,P00,X00,X09} and references therein. 
In this paper, we consider the Arnold-Beltrami-Childress (ABC) flow \cite{A65,DFG86}
\be\label{ABC}
\V_1(x,y,z)=\left\langle
\sin z+\cos y, \sin x+\cos z, \sin y+\cos x
\right\rangle.
\ee
and the Kolmogorov flow \cite{CG95} (or Archontis flow \cite{A11})
\be\label{K}
\V_2(x,y,z)=\left\langle
\sin z, \sin x,\sin y
\right\rangle.
\ee
While periodic in $(2\pi\T)^3$, these flows are well-known for exhibiting chaotic streamlines. They have been studied in many contexts, including the electromagnetic conductivity in kinematic dynamo problem, the traveling wave speed in reaction-diffusion-advection equation and the eddy diffusivity \cite{BCVV95,CG95,G12,GP92,SXZ13,WXZ}.

Denote by $\X:\R\to\R^3$ a Lagrangian trajectory of ABC or Kolmogorov flow satisfying: $\dot{\X}(\cdot)=\V(\X(\cdot))$. In search of the ballistic orbits in say $x$-direction ($y$- and $z$- directions are similar), let the trajectory start from $yz$-plane and evaluate its large time asymptotic speed in $x$-direction as ($\e_1 = \langle 1,0,0\rangle$):
\be\label{Xbar}
\bar{x}=\lim_{t\to\infty}{\X(t)\cdot\e_1\over t},\,\,\, \X(0)=\langle0,y,z \rangle.
\ee
See \cref{Xmap}. The orbits $\X(t)$ are generated on a 800$\times$800 mesh of $\langle y,z\rangle\in(2\pi\T)^2$ by ODE solver in MATLAB (ode113), and the propagation speeds $\bar{x}$ are evaluated at $t=1000$. For ABC flow, $\bar{x}$ reaches maximum when $\X(0)\approx\langle0,5.942,1.571\rangle$ (accurate to three decimal places); for Kolmogorov flow, $\bar{x}$ reaches maximum when $\X(0)\approx\langle0,0.029,1.571\rangle$. It turns out that these orbits with maximum asymptotic speeds are periodic (modulo $2\pi$) in $x$-direction, that is, there exists $\tau>0$ such that $\X(\cdot+\tau)=\X(\cdot)+2\pi\cdot\e_1$. See \cref{Xorbit}. Also the orbits with minimum asymptotic speeds are periodic in negative $x$-direction: $\X(\cdot+\tau)=\X(\cdot)-2\pi\cdot\e_1$.

The periodic orbit of ABC flow was first proved to exist in \cite{XYZ16}. The authors found an orbit that starts from the line segment $\{x=-\pi/2,y=0,z\in[0,\pi/2]\}$ and passes through the line segment $\{x=0,y\in[-\pi/2,3\pi/2],z=\pi/2\}$. Thanks to the symmetries of ABC flow, such orbit also inherits certain symmetries and therefore is periodic in $x$-direction. For Kolmogorov flow, we found numerically that the periodic orbit starts from line segment $\{x=0,y\in[0,\pi/2],z=\pi/2\}$ and passes through line segment $\{x=\pi/2,y=\pi,z\in[\pi,3\pi/2]\}$. See \cref{prop:1} and \cref{prop:2} for precise statements.

In turbulent combustion theory, G-equation is a front propagation model of thin flames \cite{W85,P00}:
\be\label{Geq}
{\partial G\over\partial t}+\V(\x)\cdot\nabla G+|\nabla G|=0.
\ee
Formulated by level set method, the flame front $\{G(\x,t)=0\}$ moves in the laminar velocity $\n=\nabla G/|\nabla G|$ due to fuel combustion along with the flow velocity $\V(\x)$ due to fuel convection. In three dimensional space, let the initial flame front be the $yz$-plane: 
\be\label{IC}
G(\x,0)=\x\cdot\e_1,\,\,\,\x\in\R^3.
\ee
Eventually the flame front propagates in $x$-direction at the so called 
{\it turbulent flame speed}:
\be\label{sTa}
s_T :=\lim_{t\to\infty}-{G(\x,t)\over t}
\ee
where convergence holds for all $\x$ and $s_T$ is independent of $\x$.
One fundamental issue in turbulent combustion theory is front speed enhancement due to fluid convection. In G-equation model, let the flow velocity be ABC flow (\ref{ABC}) with intensity $A>0$: 
\be\label{AV}
\V(\x)=A\cdot\V_1(\x)=A\cdot\left\langle
\sin z+\cos y, \sin x+\cos z, \sin y+\cos x
\right\rangle
\ee
or Kolmogorov flow (\ref{K}): $\V(\x)=A\cdot\V_2(\x)=A\cdot\left\langle
\sin z, \sin x, \sin y\right\rangle$. We would like to study the growth rate of turbulent flame speed with respective to the flow intensity: $s_T(A)$ as a function of $A$. In the case of two dimensional cellular flow $\V(x,y)=A\!\cdot\!\langle-\sin x\cos y,\cos x\sin y\rangle$, the growth rate of turbulent flame speed is given by $s_T(A)=O(A/\log A)$ \cite{O00,XY13}. Using the optimal control theory of Hamilton-Jacobi-Bellman (HJB) equation, the ballistic orbits are chosen as admissible trajectories to obtain the upper and lower bounds of turbulent flame speeds. See \cref{thm:1}.

Discretized as a monotone and consistent numerical Hamiltonian, finite difference computation of G-equation has been quite successful in two dimensional space \cite{OF02,LXY13}. When it comes to three dimensional space however, the computational cost increases considerably in large flow intensity regime. Specifically, the Courant number as well as the constraint of time step size (CFL condition, assuming $\Delta x=\Delta y=\Delta z$) reads 
\be\label{CFL_FD}\begin{array}{ll}
(6A+\sqrt{3})\cdot\Delta t/\Delta x\leq 1 & \mbox{(ABC flow)}\\
(3A+\sqrt{3})\cdot\Delta t/\Delta x\leq 1 & \mbox{(Kolmogorov flow)}
\end{array}.
\ee
Therefore it is desirable to consider other numerical methods when the flow intensity $A$ is large. 

Semi-Lagrangian (SL) scheme was first introduced as first-order approximation of scalar convection equation (also called the Courant-Isaacson-Rees scheme \cite{CIR52}). Further developed with many techniques such as dimensional splitting or higher order interpolation, semi-Lagrangian scheme has been very popular in weather forecast modeling and many other multidimensional atmospheric problems \cite{SC91}. 
As the semi-Lagrangian scheme being applied on the advection term in G-equation, it remains to discretize the laminar term. In \cite{A10}, the solution is considered smooth, and the laminar velocity is incorporated into the flow velocity for higher order approximation. In \cite{FF02}, the laminar term is discretized by Hopf-Lax formula, and the solution is evaluated through function minimization. In our present work, thanks to operator splitting, the flow velocity is discretized by semi-Lagrangian scheme, and the function is evaluated by WENO interpolation \cite{CFR05,FF13}; the laminar velocity is discretized by finite difference method, and the derivatives are evaluated by HJ WENO scheme \cite{JP00,OF02,S09}.

The rest of paper is organized as follows. In \cref{sec:2}, we find the ballistic orbits of ABC and Kolmogorov flows numerically and verify that these orbits are periodic (modulo 2$\pi$) in $x$-direction. In \cref{sec:3}, we present the control formulation of G-equation and obtain the estimates of turbulent flame speeds. In \cref{sec:4}, we provide the semi-Lagrangian discretization of G-equation and the numerical results of turbulent flame speeds. In \cref{sec:5}, we conclude the paper with comments and future works.

\begin{figure}
\begin{center}
\includegraphics[width=\textwidth]{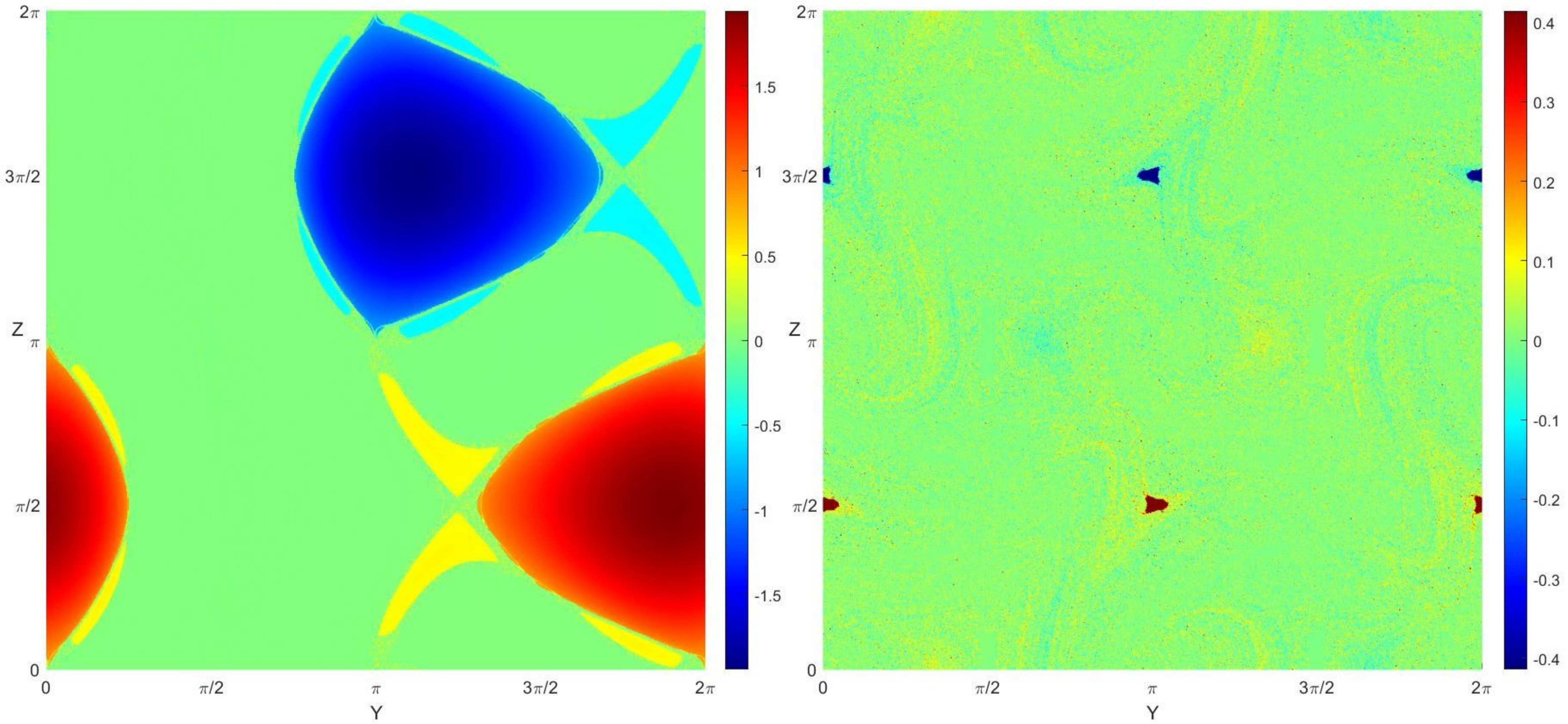}
\end{center}
\caption{Approximate asymptotic speeds of Lagrangian orbits in $x$-direction evaluated by (\ref{Xbar}). Left: ABC flow. Right: Kolmogorov flow.}
\label{Xmap}
\end{figure}

\begin{figure}
\includegraphics[width=\textwidth]{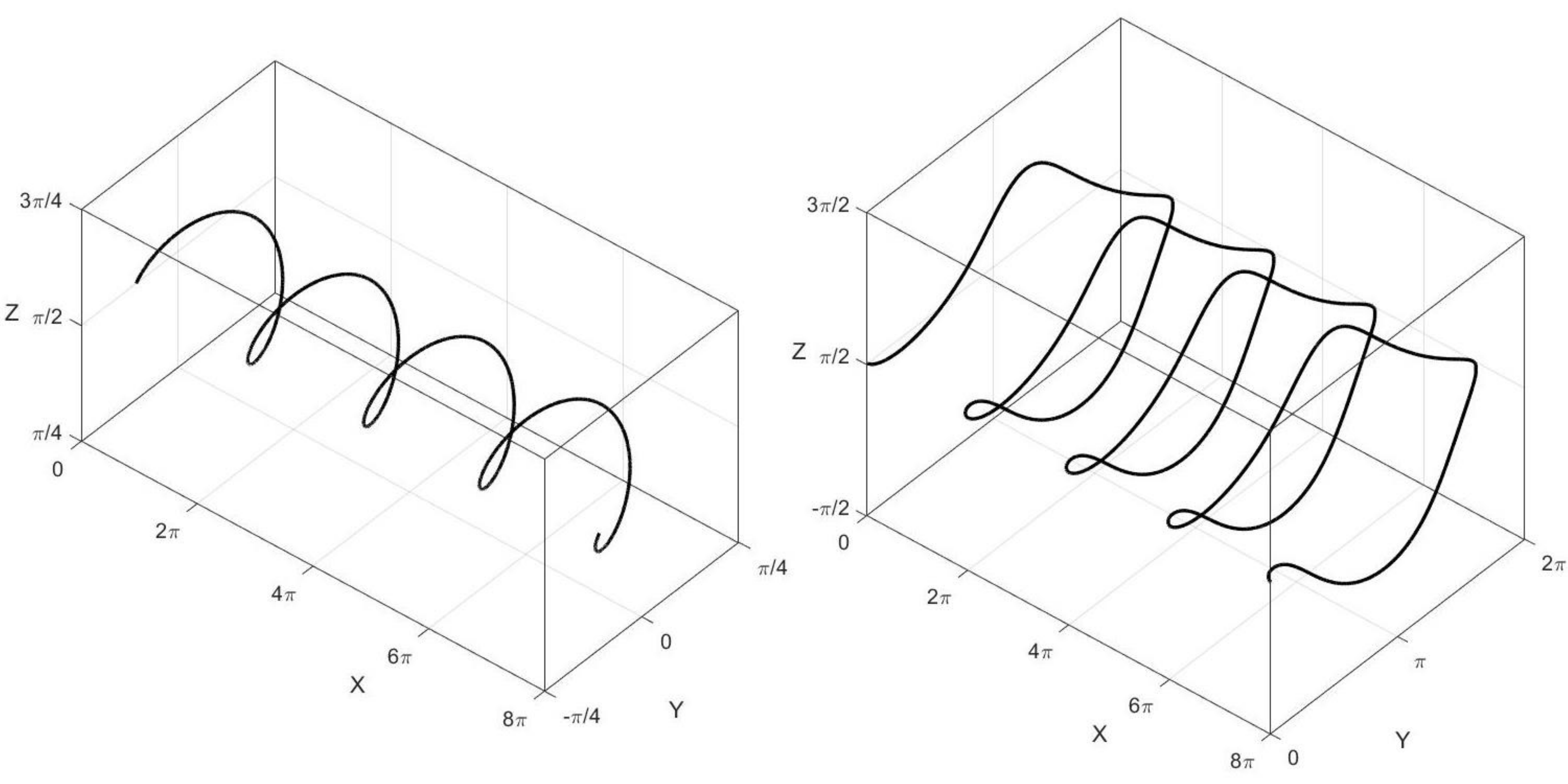}
\caption{Ballistic orbits periodic (modulo 2$\pi$) in $x$-direction. Left: ABC flow. Right: Kolmogorov flow.}
\label{Xorbit}
\end{figure}

\section{Ballistic Orbits of ABC and Kolmogorov Flows}
\label{sec:2}

We restate the periodic orbit of ABC flow in \cite{XYZ16} with more numerical description and the symmetry argument, then we present the periodic orbit of Kolmogorov flow in the same fashion. Recall $\odot$ denotes the Hadamard (element-wise) product of vectors.

\begin{proposition}\label{prop:1}
There exists an orbit $\X_1$ of the ABC flow such that
$$\X_1(0)=\langle0,-a_1,\pi/2\rangle,\, 
\X_1(\tau_1/4)=\langle\pi/2,0,\pi/2+b_1\rangle,$$
where $a_1\approx0.341$, $b_1\approx0.341$ and $\tau_1\approx3.235$. Then $\X_1$ is a ballistic orbit of the ABC flow in $x$-direction with period $\tau_1$. 

Also the orbit $\X_{-1}$ with $\X_{-1}(0)=\langle0,\pi+a_1,-\pi/2\rangle$ is a ballistic orbit periodic in negative $x$-direction.
\end{proposition}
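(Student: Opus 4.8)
The plan is to exploit the time-reversal symmetries of the ABC flow, which reduce the construction to a one-parameter shooting problem for the quarter arc $0\le t\le\tau_1/4$ followed by a purely algebraic closing of the orbit by reflection. Throughout write $\X_1=\langle X,Y,Z\rangle$ with $\dot\X_1=\V(\X_1)$ and $\V$ the ABC field in~(\ref{ABC}).

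First I would record three symmetries of $\V$ and verify each by direct substitution into~(\ref{ABC}). Two reversing involutions carry the construction: $R_1\langle x,y,z\rangle=\langle -x,\,y,\,\pi-z\rangle$ and $R_2\langle x,y,z\rangle=\langle \pi-x,\,-y,\,z\rangle$, each satisfying $DR_i\,\V=-\V\circ R_i$, so that $t\mapsto R_i(\X_1(-t))$ is again an orbit. Their fixed-point sets are the lines $\mathrm{Fix}(R_1)=\{x=0,\ z=\pi/2\}$ and $\mathrm{Fix}(R_2)=\{x=\pi/2,\ y=0\}$, which contain exactly the two prescribed points $\X_1(0)$ and $\X_1(\tau_1/4)$. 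I will also use the orientation-preserving symmetry $Q\langle x,y,z\rangle=\langle -x,\,\pi-y,\,z+\pi\rangle$, for which $DQ\,\V=\V\circ Q$, reserved for the $\X_{-1}$ claim.

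For existence I would set up a shooting argument along $\mathrm{Fix}(R_1)$: for $a$ in a suitable subinterval of $(0,\pi/2)$, flow forward from $\langle 0,-a,\pi/2\rangle$, let $T(a)$ be the first time the orbit meets the plane $\{x=\pi/2\}$, and let $Y(a)$ be its $y$-coordinate there. The target $\mathrm{Fix}(R_2)$ is reached precisely when $Y(a)=0$, so it suffices to exhibit a sign change of the continuous map $a\mapsto Y(a)$ and invoke the intermediate value theorem; the numerics of \cref{Xmap} locate the root at $a_1\approx0.341$, whereupon $\tau_1:=4\,T(a_1)$ and $b_1:=Z(T(a_1))-\pi/2$. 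This is the step I expect to be the main obstacle: one must guarantee that the first crossing of $\{x=\pi/2\}$ is transversal and that $T(a)$ and $Y(a)$ are well defined and continuous on the shooting interval, i.e. that the orbit neither stalls before reaching $x=\pi/2$ nor grazes the plane. This requires quantitative control of $\V$ along the arc, and is exactly the point established rigorously in \cite{XYZ16}; the qualitative IVT sign-change argument suffices for the remaining steps.

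Finally I would close and label the orbit by the standard reversibility calculus. Since $\X_1(0)\in\mathrm{Fix}(R_1)$, uniqueness of the initial value problem forces $\X_1(-t)=R_1\X_1(t)$; since $\X_1(\tau_1/4)\in\mathrm{Fix}(R_2)$, reflection about $t=\tau_1/4$ gives $\X_1(\tau_1/2-t)=R_2\X_1(t)$. Composing yields $\X_1(t+\tau_1/2)=R_2R_1\,\X_1(t)$ with $R_2R_1\langle x,y,z\rangle=\langle x+\pi,\,-y,\,\pi-z\rangle$, and iterating once more $\X_1(t+\tau_1)=(R_2R_1)^2\X_1(t)=\X_1(t)+2\pi\,\e_1$, i.e. periodicity modulo $2\pi$ in the $x$-direction with asymptotic speed $\bar x=2\pi/\tau_1>0$. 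That this orbit realizes the \emph{maximal} asymptotic speed, and so is ballistic, I would take from the numerical optimization in \cref{Xmap} and \cref{Xorbit} rather than prove to be the global maximizer here. For the last assertion, applying $Q$ to $\X_1$ produces the orbit $\X_{-1}:=Q\circ\X_1$, whose first coordinate is $-X(t)\to-\infty$; here $\X_{-1}(0)=Q\langle0,-a_1,\pi/2\rangle=\langle0,\pi+a_1,3\pi/2\rangle\equiv\langle0,\pi+a_1,-\pi/2\rangle$ modulo $2\pi$, and $\X_{-1}(t+\tau_1)=\X_{-1}(t)-2\pi\,\e_1$, giving a ballistic orbit periodic in the negative $x$-direction.
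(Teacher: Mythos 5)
Your proposal is correct and follows essentially the same route as the paper: a shooting/intermediate-value argument for the quarter arc from the line $\{x=0,z=\pi/2\}$ to the line $\{x=\pi/2,y=0\}$ (with the quantitative transversality issue delegated to \cite{XYZ16}, while the paper relies on the numerics of \cref{orbitPf} for the sign change), followed by closing the orbit via symmetry. Your reversing involutions $R_1,R_2$ and equivariance $Q$ are precisely the paper's symmetries $\V_1(\pi-x,-y,z)=\langle1,1,-1\rangle\odot\V_1(x,y,z)$, $\V_1(2\pi-x,y,\pi-z)=\langle1,-1,1\rangle\odot\V_1(x,y,z)$ and $\V_1(-x,\pi-y,-\pi+z)=\langle-1,-1,1\rangle\odot\V_1(x,y,z)$ up to $2\pi$-translations, so your glide-translation composition $(R_2R_1)^2=$ translation by $2\pi\,\e_1$ is the same computation the paper carries out by reflecting the quarter orbit forward twice (anchored at $t=\tau_1/4$ and $t=\tau_1/2$ rather than at $t=0$ and $t=\tau_1/4$).
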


\begin{proof}
(a) See left of \cref{orbitPf}. If $\X_1(0)=\langle 0,-0.342,\pi/2\rangle$, then $\X_1(t)$ passes through the face $\{x=\pi/2,y<0,z\in[\pi/2,\pi]\}$; if $\X_1(0)=\langle 0,-0.341,\pi/2\rangle$, then $\X_1(t)$ passes through the face $\{x=\pi/2,y>0,z\in[\pi/2,\pi]\}$. Therefore there exists $a_1\in(0.341,0.342)$ such that if $\X_1(0)=\langle0,-a_1,\pi/2\rangle$ then $\X_1(t)$ passes through the edge $\{x=\pi/2,y=0,z\in[\pi/2,\pi]\}$ between the two faces. 


(b) Observe the symmetry of the flow about axis $\{x=\pi/2,y=0,z\in\R\}$:
$$\V_1(\pi-x,-y,z)
=\langle1,1,-1\rangle\odot\V_1(x,y,z).$$
Since $\X_1(\tau_1/4)$ lies on the axis, we have the symmetry of the orbit
$$\X_1(\tau_1/2)-\X_1(\tau_1/4)
=\langle1,1,-1\rangle\odot
(\X_1(\tau_1/4)-\X_1(0))$$
and therefore $\X_1(\tau_1/2)=\langle\pi,a_1,\pi/2\rangle$.

Observe the symmetry of the flow about axis $\{x=\pi,y\in\R,z=\pi/2\}$:
$$\V_1(2\pi-x,y,\pi-z)
=\langle1,-1,1\rangle\odot\V_1(x,y,z).$$
Since $\X_1(\tau_1/2)$ lies on the axis, we have the symmetry of the orbit
$$\X_1(\tau_1)-\X_1(\tau_1/2)
=\langle1,-1,1\rangle\odot
(\X_1(\tau_1/2)-\X_1(0))$$
and therefore $\X_1(\tau_1)=\langle2\pi,-a_1,\pi/2\rangle=\X_1(0)+2\pi\cdot\e_1$. 

(c) Observe the symmetry of the flow:
$$\V_1(-x,\pi-y,-\pi+z)=\langle-1,-1,1\rangle\odot\V_1(x,y,z).$$ 
Since $\X_1(0)$ and $\X_{-1}(0)$ satisfy the symmetry condition, we have the symmetry of the orbit
$$\X_{-1}(\tau_1)-\X_{-1}(0)=\langle-1,-1,1\rangle\odot
(\X_1(\tau_1)-\X_1(0))$$
and therefore $\X_{-1}(\tau_1)=\langle-2\pi,\pi+a_1,-\pi/2\rangle=\X_{-1}(0)-2\pi\cdot\e_1$.
\end{proof}

\begin{remark}
The symmetry condition
$$\V_1(\pi/2-x,\pi/2-z,\pi/2-y)=\sigma\circ\V_1(x,y,z)$$ with  $\sigma:(x,y,z)\mapsto(x,z,y)$ further implies $a_1=b_1$.
\end{remark}

\begin{proposition}\label{prop:2} 
There exists an orbit $\X_2$ of the Kolmogorov flow such that
$$\X_2(0)=\langle0,a_2,\pi/2\rangle,\,
\X_2(\tau_2/4)=\langle\pi/2,\pi,\pi+b_2\rangle,$$ 
where $a_2\approx0.029$, $b_2\approx0.602$ and $\tau_2\approx15.156$. Then $\X_2$ is a ballistic orbit of the Kolmogorov flow in $x$-direction with period $\tau_2$.

Also the orbit $\X_{-2}$ with $\X_{-2}(0)=\langle0,-a_2,-\pi/2\rangle$ is a ballistic orbit periodic in negative $x$-direction.
\end{proposition}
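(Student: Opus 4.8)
The plan is to follow the three-part template of the proof of \cref{prop:1}: first an intermediate-value argument to pin down the initial datum $a_2$, then a pair of reflection symmetries of $\V_2$ that fold the quarter-orbit into a closed ballistic orbit, and finally a single symmetry that produces the reversed orbit $\X_{-2}$.

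For the existence of the quarter-orbit I would use the plane $\{y=\pi\}$ as a Poincar\'e section. This is legitimate because $\dot y=\sin x>0$ as long as $x\in(0,\pi)$, so along the first half-period $y$ increases monotonically from $a_2$ to $\pi$ and meets the section exactly once. Recording the $x$-coordinate $x_*(a_2)$ at that crossing as a continuous function of the initial height in $\X_2(0)=\langle0,a_2,\pi/2\rangle$, I would integrate $\dot\X_2=\V_2(\X_2)$ for two nearby values of $a_2$ bracketing $0.029$ and check that $x_*$ lands on opposite sides of $\pi/2$; the intermediate value theorem then delivers an $a_2$ for which the orbit meets the line $\{x=\pi/2,\,y=\pi\}$ precisely. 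At that instant I define $\tau_2/4$ and read off $b_2$ from the $z$-coordinate $\pi+b_2$. This is one scalar condition ($x=\pi/2$) tuned by one parameter ($a_2$), so it is well posed, and bisection should return $a_2\approx0.029$, $b_2\approx0.602$, $\tau_2\approx15.156$.

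The symmetry folding is then routine algebra. Using $\sin(\pi-s)=\sin s$ and $\sin(2\pi-s)=-\sin s$ I would verify the reflection identities $\V_2(\pi-x,2\pi-y,z)=\langle1,1,-1\rangle\odot\V_2(x,y,z)$ and $\V_2(2\pi-x,y,\pi-z)=\langle1,-1,1\rangle\odot\V_2(x,y,z)$. Because $\X_2(\tau_2/4)$ lies on the fixed line $\{x=\pi/2,\,y=\pi\}$ of the first reflection, the orbit is symmetric about $t=\tau_2/4$, so
$$\X_2(\tau_2/2)-\X_2(\tau_2/4)=\langle1,1,-1\rangle\odot(\X_2(\tau_2/4)-\X_2(0)),$$
which gives $\X_2(\tau_2/2)=\langle\pi,2\pi-a_2,\pi/2\rangle$. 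This point automatically lies on the fixed line $\{x=\pi,\,z=\pi/2\}$ of the second reflection, so no further root-finding is needed; reflecting about $t=\tau_2/2$ yields
$$\X_2(\tau_2)-\X_2(\tau_2/2)=\langle1,-1,1\rangle\odot(\X_2(\tau_2/2)-\X_2(0)),$$
and hence $\X_2(\tau_2)=\langle2\pi,a_2,\pi/2\rangle=\X_2(0)+2\pi\cdot\e_1$, a closed orbit (modulo $2\pi$) of period $\tau_2$ drifting by $2\pi$ in $x$.

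For the reversed orbit I would use that $\V_2$ is odd, $\V_2(-\x)=-\V_2(\x)$, so $-\X_2(\cdot)$ is again an orbit. Since $\X_{-2}(0)=\langle0,-a_2,-\pi/2\rangle=-\X_2(0)$, uniqueness gives $\X_{-2}(\cdot)=-\X_2(\cdot)$, whence
$$\X_{-2}(\tau_2)-\X_{-2}(0)=\langle-1,-1,-1\rangle\odot(\X_2(\tau_2)-\X_2(0))=-2\pi\cdot\e_1,$$
a ballistic orbit in the negative $x$-direction. The only genuinely non-routine step is (a): the symmetry manipulations are exact, but the existence of the crossing rests on continuity and bracketing, and since $\tau_2\approx15.156$ is nearly five times the ABC period the orbit winds substantially --- note that $\dot x=\sin z$ already reverses sign before the quarter-period since $z=\pi+b_2>\pi$ there --- so verifying that $y$ stays monotone and that $x_*(a_2)$ indeed straddles $\pi/2$ is the part demanding the most care.
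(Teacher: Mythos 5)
Your proposal is correct and follows essentially the same route as the paper's proof: the same numerically assisted intermediate-value/shooting argument in $a_2$ across the section $\{y=\pi\}$, the same two reflection symmetries with fixed axes $\{x=\pi/2,\,y=\pi\}$ and $\{x=\pi,\,z=\pi/2\}$ folding the quarter-orbit into a full period with drift $2\pi\cdot\e_1$, and the same point symmetry $\V_2(-\x)=-\V_2(\x)$ yielding $\X_{-2}=-\X_2$. (Incidentally, your displayed relation $\X_2(\tau_2/2)-\X_2(\tau_2/4)=\langle1,1,-1\rangle\odot(\X_2(\tau_2/4)-\X_2(0))$ corrects a typo in the paper, whose left-hand side reads $\X_2(\tau_2)-\X_2(\tau_2/4)$.)
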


\begin{proof}
(a) See right of \cref{orbitPf}. If $\X_2(0)=\langle 0,0.029,\pi/2\rangle$, then $\X_2(t)$ passes through the face $\{x>\pi/2,y=\pi,z\in[\pi,3\pi/2]\}$; if $\X_2(0)=\langle0,0.03,\pi/2\rangle$, then $\X_2(t)$ passes through the face $\{x<\pi/2,y=\pi,z\in[\pi,3\pi/2]\}$. Therefore there exists $a_2\in(0.029,0.03)$ such that if $\X_2(0)=\langle0,a_2,\pi/2\rangle$ then $\X_2(t)$ passes through the edge $\{x=\pi/2,y=\pi,z\in[\pi,3\pi/2]\}$ between the two faces.


(b) Observe the symmetry of the flow about axis $\{x=\pi/2,y=\pi,z\in\R\}$:
$$\V_2(\pi-x,2\pi-y,z)
=\langle1,1,-1\rangle\odot\V_2(x,y,z).$$
Since $\X_2(\tau_2/4)$ lies on the axis, we have the symmetry of the orbit
$$\X_2(\tau_2)-\X_2(\tau_2/4)
=\langle1,1,-1\rangle\odot
(\X_2(\tau_2/4)-\X_2(0))$$
and therefore $\X_2(\tau_2/2)=\langle\pi,2\pi-a_2,\pi/2\rangle$.

Observe the symmetry of the flow about axis $\{x=\pi,y\in\R,z=\pi/2\}$:
$$\V_2(2\pi-x,y,\pi-z)
=\langle1,-1,1\rangle\odot\V_2(x,y,z).$$
Since $\X_2(\tau_2/2)$ lies on the axis, we have the symmetry of the orbit
$$\X_2(\tau_2)-\X_2(\tau_2/2)
=\langle1,-1,1\rangle\odot
(\X_2(\tau_2/2)-\X_2(0))$$
and therefore $\X_2(\tau_2)=\langle2\pi,a_2,\pi/2\rangle=\X_2(0)+2\pi\cdot\e_1$.

(c) Observe the symmetry of the flow about point $\langle0,0,0\rangle$:
$$\V_2(-x,-y,-z)=\langle-1,-1,-1\rangle\odot\V_2(x,y,z).$$ 
Since $\X_2(0)$ and $\X_{-2}(0)$ are symmetric about the point, we have the symmetry of the orbit
$$\X_{-2}(\tau_2)-\X_{-2}(0)=\langle-1,-1,-1\rangle\odot
(\X_2(\tau_2)-\X_2(0))$$ 
and therefore $\X_{-2}(\tau_2)=\langle-2\pi,-a_2,-\pi/2\rangle=\X_{-2}(0)-2\pi\cdot\e_1$.
\end{proof}

\begin{remark}
The symmetry of the flow
$$\V_2(x,\pi+y,z)=\langle1,1,-1\rangle\odot\V_2(x,y,z)$$
implies $\X_2(0)=\langle0,\pi+a_2,\pi/2\rangle$ gives another ballistic orbit periodic in $x$-direction, and  $\X_{-2}(0)=\langle0,-\pi-a_2,-\pi/2\rangle$ gives another ballistic orbit periodic in negative $x$-direction.
\end{remark}

\begin{figure}
\begin{center}
\includegraphics[width=\textwidth]{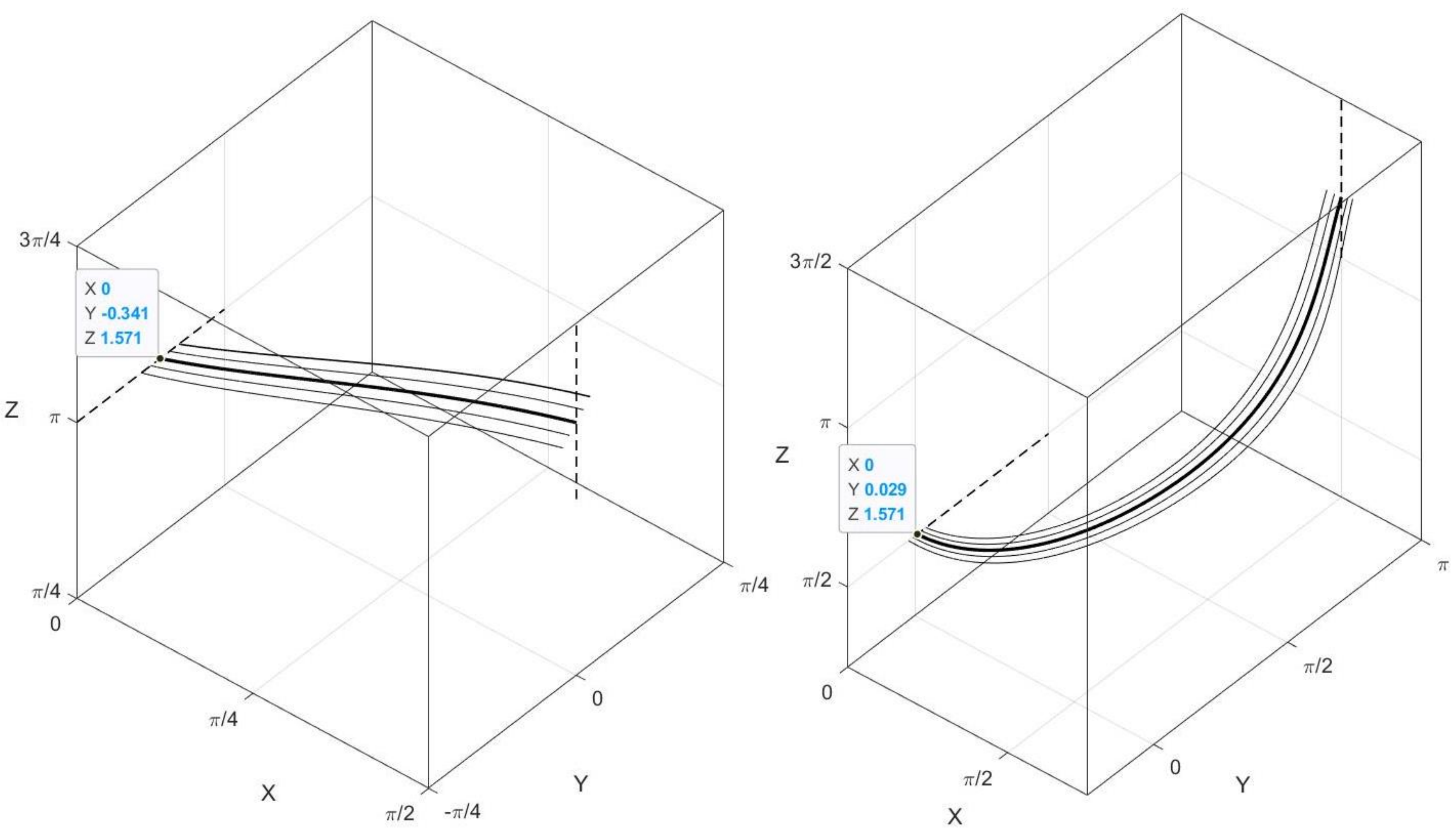}
\end{center}
\caption{Numerical validation of ballistic orbits periodic (modulo 2$\pi$) in $x$-direction. Left: ABC flow. Right: Kolmogorov flow.}
\label{orbitPf}
\end{figure}

\section{Estimates of Turbulent Flame Speeds}
\label{sec:3}

The solution of G-equation (\ref{Geq}) is given by the control representation formula:
$$G(\y,t)=\inf_{\avar(\cdot)}G(\x_{\y,\avar}(t),0),$$
where the infimum is over all admissible controls $\avar:(0,t)\to\R^3$ and the corresponding trajectories $\x=\x_{\y,\avar}:(0,t)\to\R^3$ satisfying
\be\label{ctrl}
\dot{\x}(\cdot)=\V(\x(\cdot))+\avar(\cdot),\,|\avar(\cdot)|\leq 1,\,\x(0)=\y.
\ee
Given the initial condition (\ref{IC}), the turbulent flame speed (\ref{sTa}) is equivalent to:
\be\label{sTb}
s_T=\lim_{t\to\infty}\sup_{\avar(\cdot)}{\x_{\y,\avar}(t)\cdot(-\e_1)\over t}.
\ee
As a dynamical programming problem, finding the supremum among all admissible controls in large time will suffer from the ``curse of dimensionality''. By finding an admissible trajectory that traces in negative $x$-direction as far as possible, its propagation speed gives a lower estimate of turbulent flame speed.

\begin{theorem}\label{thm:1}
Consider the G-equation (\ref{Geq}) where the flow velocity is the ABC flow or Kolmogorov flow and the flow intensity is $A>0$ (\ref{AV}). Let $s_T$ be the turbulent flame speed in $x$-direction  (\ref{sTb}), then
\be\label{sTbd}
{2\pi\over\tau_i}\cdot A+{2\pi\over\tau_i\cdot\|\V_i\|_{\infty}}\leq s_T(A)\leq \|\V_i\cdot\e_1\|_{\infty}\cdot A+1,
\ee
where $\tau_i$ is the period of the ballistic orbit of the flow $\V_i$, $i=1,2$.
\end{theorem}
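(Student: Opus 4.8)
The plan is to prove the two inequalities in (\ref{sTbd}) separately, working directly from the control representation (\ref{sTb}) and the control dynamics (\ref{ctrl}).

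For the upper bound I would take an \emph{arbitrary} admissible pair $(\avar,\x)$ and differentiate the scalar $\x(t)\cdot(-\e_1)$ along the trajectory. Using $\V=A\cdot\V_i$,
$$\frac{d}{dt}\big(\x(t)\cdot(-\e_1)\big)=A\,\V_i(\x)\cdot(-\e_1)+\avar\cdot(-\e_1)\leq A\,\|\V_i\cdot\e_1\|_{\infty}+1,$$
where the flow term is bounded pointwise by $A\|\V_i\cdot\e_1\|_\infty$ and the control term by $|\avar|\leq 1$. Integrating from $0$ to $t$ with $\x(0)=\y$, dividing by $t$, then taking the supremum over $\avar$ and letting $t\to\infty$ annihilates the bounded term $\y\cdot(-\e_1)/t$ and yields $s_T(A)\leq\|\V_i\cdot\e_1\|_\infty\,A+1$.

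For the lower bound the idea is to display a single admissible trajectory that drifts in the $-\e_1$ direction at the claimed rate; by the $\y$-independence of $s_T$ stated after (\ref{sTa}) I may start it on the orbit itself. I would take the negative-direction ballistic orbit $\X_{-i}$ of \cref{prop:1} and \cref{prop:2}, which at unit intensity satisfies $\dot{\X}_{-i}=\V_i(\X_{-i})$, has period $\tau_i$, and advances by $-2\pi\,\e_1$ per period, tracing a closed curve $\Gamma$ on which $\V_i$ never vanishes (a zero of $\V_i$ on $\Gamma$ would make $\X_{-i}$ a stationary point, contradicting the $-2\pi\,\e_1$ drift). I would then follow the same curve $\Gamma$ but accelerate along it with the flow-aligned control $\avar=\V_i(\x)/|\V_i(\x)|$, which is admissible since $|\avar|=1$. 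Writing the controlled motion as a reparametrization $\x(t)=\X_{-i}(\phi(t))$, the requirement $\dot\x=A\,\V_i(\x)+\avar$ forces $\dot\phi=A+1/|\V_i(\X_{-i}(\phi))|$, a scalar ODE with positive continuous right-hand side, hence uniquely solvable with $\phi$ strictly increasing. Timing one full loop,
$$T=\int_0^{\tau_i}\frac{d\phi}{A+1/|\V_i(\X_{-i}(\phi))|}\leq\frac{\tau_i}{A+1/\|\V_i\|_\infty},$$
since $|\V_i(\X_{-i}(\phi))|\leq\|\V_i\|_\infty$ pointwise. Each loop contributes exactly $+2\pi$ to $\x\cdot(-\e_1)$, so the asymptotic drift speed of this one admissible trajectory is $2\pi/T\geq(2\pi/\tau_i)\big(A+1/\|\V_i\|_\infty\big)$; feeding it into (\ref{sTb}) as a single competitor gives the left inequality of (\ref{sTbd}).

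The routine parts are the two pointwise estimates and the elementary integral bound on $T$. The step requiring the most care is the lower bound: I must confirm that following $\Gamma$ with the flow-aligned control is a genuine admissible solution of (\ref{ctrl}) (well-posedness of the $\phi$-ODE, which rests on $\V_i\neq 0$ along $\Gamma$), and that $2\pi/T$ is the \emph{actual} asymptotic speed rather than a mere liminf. The latter is secured because the reparametrized motion is periodic in $t$ modulo $-2\pi\,\e_1$, so $\x(t)\cdot(-\e_1)/t\to 2\pi/T$ exactly, and because each finite-horizon supremum in (\ref{sTb}) dominates this fixed competitor, the limit passes through to give $s_T(A)\geq 2\pi/T$.
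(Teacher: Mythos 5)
Your proof is correct and follows essentially the same route as the paper's: the same pointwise bound on $\dot{\x}\cdot(-\e_1)$ for the upper estimate, and for the lower estimate the same competitor obtained by tracing the negative ballistic orbit $\X_{-i}$ with the flow-aligned unit control, leading to the identical period integral $\int_0^{\tau_i}|\V_i|/(A|\V_i|+1)\,d\phi$ and the same bound $2\pi/T\geq(2\pi/\tau_i)(A+1/\|\V_i\|_\infty)$. Your added checks (non-vanishing of $\V_i$ along the orbit, well-posedness of the reparametrization ODE, and exact $T$-periodicity modulo $-2\pi\e_1$ making the drift a genuine limit) are details the paper leaves implicit, not a different argument.
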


\begin{proof}
For any admissible trajectories, the $x$-component of (\ref{ctrl}) implies
$$
\dot{\x}\cdot(-\e_1)\leq \|\V\cdot\e_1\|_{\infty}+1
=\|\V_i\cdot\e_1\|_{\infty}\cdot A+1.
$$ 
Therefore the upper bound follows immediately.
Recall $\X_{-i}$ denotes the ballistic orbit of flow $\V_i$ periodic in negative $x$-direction.
In (\ref{ctrl}), denote $\x_{-i}$ the admissible trajectory by choosing the initial position same to the ballistic orbit and the control to be the unit tangent of flow $\V=A\cdot\V_i$ at present position:
$$\x_{-i}(0)=\X_{-i}(0),\,\,\,\avar_{-i}(\cdot)={\V(\x_{-i}(\cdot))\over|\V(\x_{-i}(\cdot))|}={\V_i(\x_{-i}(\cdot))\over|\V_i(\x_{-i}(\cdot))|}.$$ 
Then $\x_{-i}$ is identical to $\X_{-i}$ but traced with different speed:
$$\dot{\x}_{-i}
=A\cdot\V_i(\x_{-i})+\avar_{-i}
={A\cdot|\V_i(\x_{-i})|+1\over|\V_i(\x_{-i})|}\V_i(\x_{-i})
={A\cdot|\V_i(\X_{-i})|+1\over|\V_i(\X_{-i})|}\dot{\X}_{-i}.$$
When a particle traces along the ballistic orbit $\X_{-i}$, the period is $\tau_i$ and the average speed is $2\pi/\tau_i$. For ballistic orbit $\x_{-i}$, the period becomes 
$$\tau'_{i}=\int_0^{\tau_i}{|\V_i(\X_{-i}(t)|\over A\cdot|\V_i(\X_{-i}(t)|+1}dt\leq{\|\V_i\|_{\infty}\over A\cdot\|\V_i\|_{\infty}+1}\tau_{i}$$
and the average speed
$${2\pi/\tau'_i}\geq (2\pi/\tau_i)\cdot(A+1/\|\V_i\|_{\infty})$$
gives a lower bound of turbulent flame speed.
\end{proof}

\begin{remark}
(a) For ABC flow, $\|\V_1\cdot\e_1\|_{\infty}=2$, $\|\V_1\|_{\infty}=\sqrt{6}$, and $\tau_1\approx 3.235$.  Therefore the estimate (\ref{sTbd}) reads
$$1.942\cdot A+0.793\leq s_T(A)\leq 2\cdot A+1.$$
(b) For Kolmogorov flow, $\|\V_2\cdot\e_1\|_{\infty}=1$, $\|\V_2\|_{\infty}=\sqrt{3}$, and $\tau_2\approx 15.156$.  Therefore the estimate  (\ref{sTbd}) reads
$$0.414\cdot A+0.239\leq s_T(A)\leq A+1.$$
\end{remark}

\section{Semi-Lagrangian Scheme for G-equation}
\label{sec:4}

Let G-equation (\ref{Geq}) be written in the operator form
$$
{\partial G\over\partial t}
+\Lvar_xG+\Lvar_yG+\Lvar_zG+\Lvar_eG=0,
$$
where $\Lvar_x, \Lvar_y, \Lvar_z$ are the convection terms in $x,y,z$-direction respectively and $\Lvar_e$ is the laminar term,
then the solution is presented in the semi-group form
$$
G(\cdot,t)=e^{(\Lvar_x+\Lvar_y+\Lvar_z+\Lvar_e)t}G(\cdot,0).
$$
Due to Strang splitting, its temporal approximation is given by
$$
G(\cdot,t+\Delta t)\approx 
e^{\Lvar_x{\Delta t\over2}}
e^{\Lvar_y{\Delta t\over2}}
e^{\Lvar_z{\Delta t\over2}}
e^{\Lvar_e\Delta t}
e^{\Lvar_z{\Delta t\over2}}
e^{\Lvar_y{\Delta t\over2}}
e^{\Lvar_x{\Delta t\over2}}G(\cdot,t).$$
Therefore it suffices to consider the convection equation in $x$-direction ($y$- and $z$-directions are similar)
\be\label{GeqC}
{\partial G\over\partial t}
+c(\x){\partial G\over\partial x}=0
\ee
and the eikonal equation
\be\label{GeqE}
{\partial G\over\partial t}+|\nabla G|=0.
\ee

For scalar convection equation (\ref{GeqC}), its first order semi-Lagrangian discretization is given by
$$
G^{n+1}_{i,j,k}=\I[G^n](x_i-c_{i,j,k}\Delta t,y_j,z_k),
$$
where $\I[\cdot]$ denotes the interpolation of the function. For example, if 
$$
x_i-c_{i,j,k}\Delta t\in[x_{i'},x_{i'+1}], i'\in\Z,
$$
then the function value between the grid points is evaluated by
$$
\I[G^n](x_{i'}+\lambda\Delta x,y_j,z_k)\approx (1-\lambda)G^n_{i',j,k}+\lambda G^n_{i'+1,j,k}, \lambda\in[0,1].
$$
To improve the numerical results, the characteristic curves are obtained by high accuracy solvers, and the function values are evaluated by WENO interpolation \cite{CFR05,FF13}.

For eikonal equation (\ref{GeqE}), its first order finite difference forward Euler discretization is given by
$$
{G^{n+1}_{i,j,k}-G^n_{i,j,k}\over\Delta t}+\sqrt{(D_xG^n_{i,j,k})^2+(D_yG^n_{i,j,k})^2+(D_zG^n_{i,j,k})^2}=0,
$$
where the spatial derivatives are evaluated by the Godunov flux of one-side derivatives:
$$\begin{array}{c}
(D_xG^n_{i,j,k})^2=\max\left(
\max\left({G^n_{i,j,k}-G^n_{i-1,j,k}\over\Delta x},0\right)^2,
\min\left({G^n_{i+1,j,k}-G^n_{i,j,k}\over\Delta x},0\right)^2
\right),\\
(D_yG^n_{i,j,k})^2=\max\left(
\max\left({G^n_{i,j,k}-G^n_{i,j-1,k}\over\Delta y},0\right)^2,\min\left({G^n_{i,j+1,k}-G^n_{i,j,k}\over\Delta y},0\right)^2
\right),\\
(D_zG^n_{i,j,k})^2=\max\left(
\max\left({G^n_{i,j,k}-G^n_{i,j,k-1}\over\Delta z},0\right)^2,\min\left({G^n_{i,j,k+1}-G^n_{i,j,k}\over\Delta z},0\right)^2
\right).
\end{array}
$$
To achieve higher order accuracy, the one-side spatial derivatives are evaluated by HJ WENO scheme, and the time steps are iterated by TVD (total variation diminishing) Runge-Kutta scheme \cite{JP00,OF02,S09}.

We would like to solve G-equation \cref{Geq} with planar initial condition \cref{IC} in whole space. If we write $G(\x,t)=\x\cdot\e_1+U(\x,t)$, thanks to $\V(\x)$ being periodic on  $(2\pi\T)^3$, then $U(\x,t)$ satisfies the initial value problem on periodic domain:
\be\label{IVP}
\left\{\begin{array}{l}
\displaystyle{\partial U\over\partial t}+\V(\x)\cdot(\nabla U+\e_1)+|\nabla U+\e_1|=0
\smallskip\\
U(\x,0)=0,\,\x\in(2\pi\T)^3\,,t>0
\end{array}\right..
\ee
Numerical computation of (\ref{IVP}) is performed on a cubic domain with mesh size 160$\times$160$\times$160. Note that the semi-Lagrangian scheme is unconditionally stable, therefore the constraint of  time-step size comes from the eikonal equation (\ref{GeqE}) only: 
\be\label{CFL_E}
\sqrt{3}\cdot\Delta t/\Delta x\leq1.
\ee
For the convection equation with variable velocity (\ref{GeqC}), the local truncation error increases as the flow intensity increases \cite{D10}. Therefore we shall reduce $\Delta t$ with respect to increase of $A$. For a balance between computational efficiency and accuracy, we choose the time step size as the intermediate of (\ref{CFL_FD}) and (\ref{CFL_E}) by
$$\sqrt{A+3}\cdot\Delta t/\Delta x\leq1.$$

See \cref{surf} for the level surfaces propagating in ABC flow (left) and 
Kolmogorov flow (right). Notice that while the flame front moves forward along 
the ballistic orbits of the flow in $x$-direction, 
there are tails close behind due to the flame front 
being dragged backward by the ballistic orbits in negative $x$-direction. 
The level surfaces contain sophisticated structures 
as a result from interaction with the flow. 

See \cref{sTplot} for the plot of the turbulent 
flame speed $s_T$ with respect to the flow intensity $A$ using (\ref{sTa}). 
Observe that the computed $s_T(A)$ for ABC flow indeed lies 
between the narrow gap of the upper and lower bounds. This indicates good accuracy 
of the proposed numerical scheme, and that 
the numerical diffusion is well reduced by higher order WENO schemes. 
For Kolmogorov flow, we see that the $s_T(A)$ curve  
almost attaches to the upper bound when $A$ is small. 
As the flame front is weakly corrugated in weak convection, 
front speed enhancement is driven by the shear speed $\V_2\cdot\e_1=\sin z=1$ 
along plane $z=\pi/2$. When $A$ is large, the $s_T(A)$ curve tends to be 
parallel to the lower bound. 
As the flame front is severely corrugated in strong convection, 
front speed enhancement is driven by the ballistic orbit $\X_2$ of the flow with 
asymptotic speed $2\pi/\tau_2\approx0.414$ which appears in the lower bound of \cref{thm:1}.

\begin{figure}
\includegraphics[width=\textwidth]{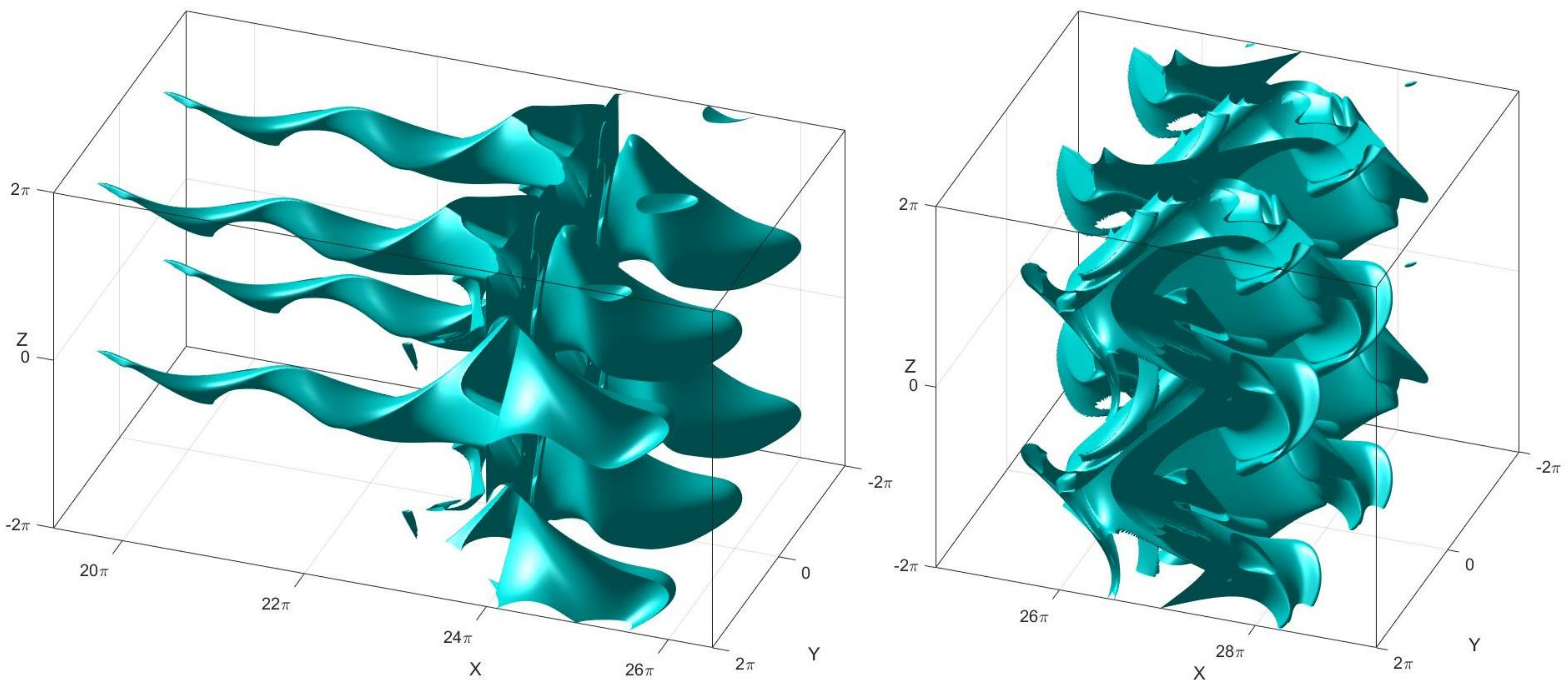}
\caption{Level surfaces of G-equation at $t=10$. Left: ABC flow with $A=4$. Right: Kolmogorov flow with $A=16$.}
\label{surf}
\end{figure}

\begin{figure}
\includegraphics[width=\textwidth]{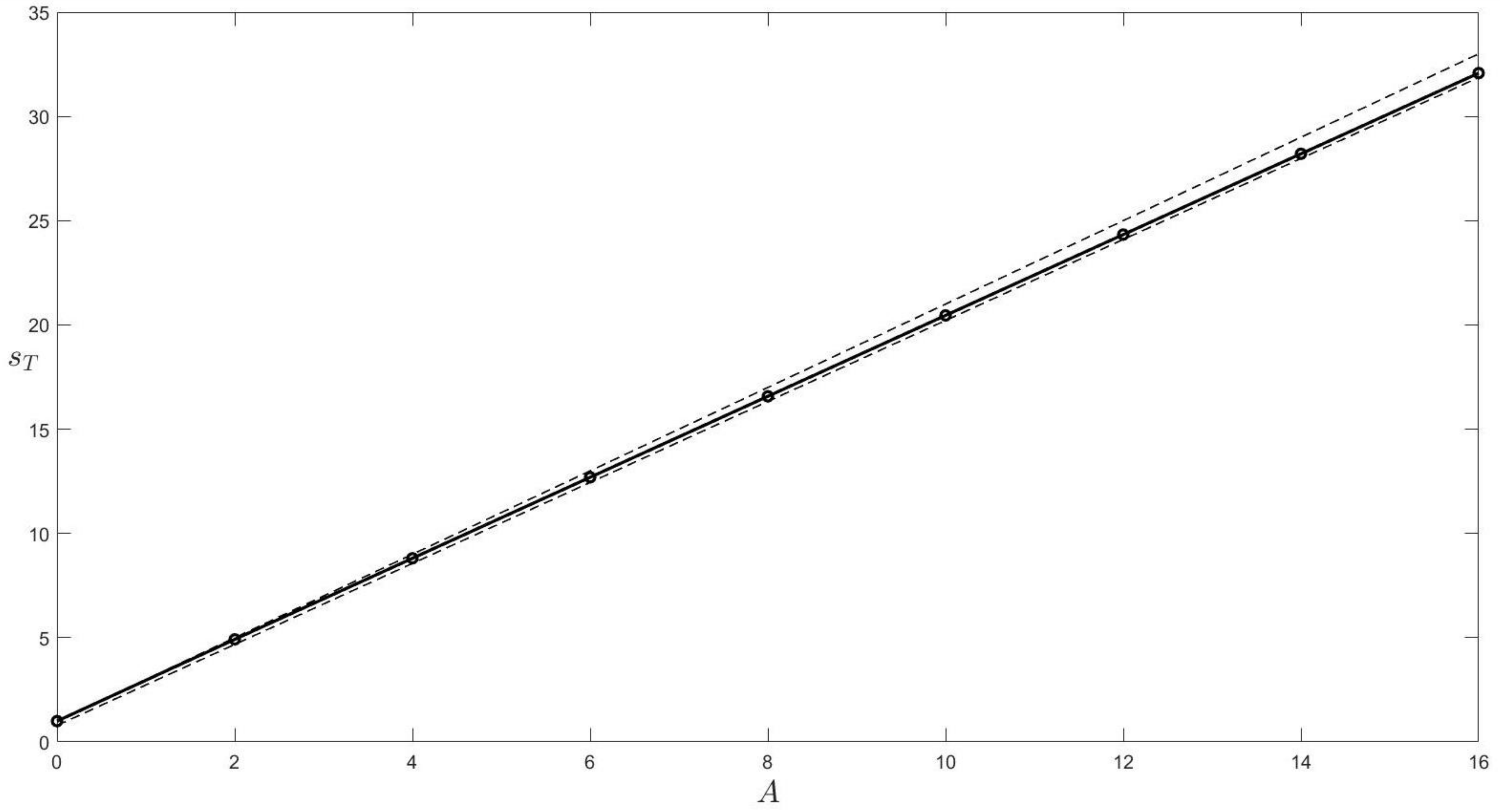}
\includegraphics[width=\textwidth]{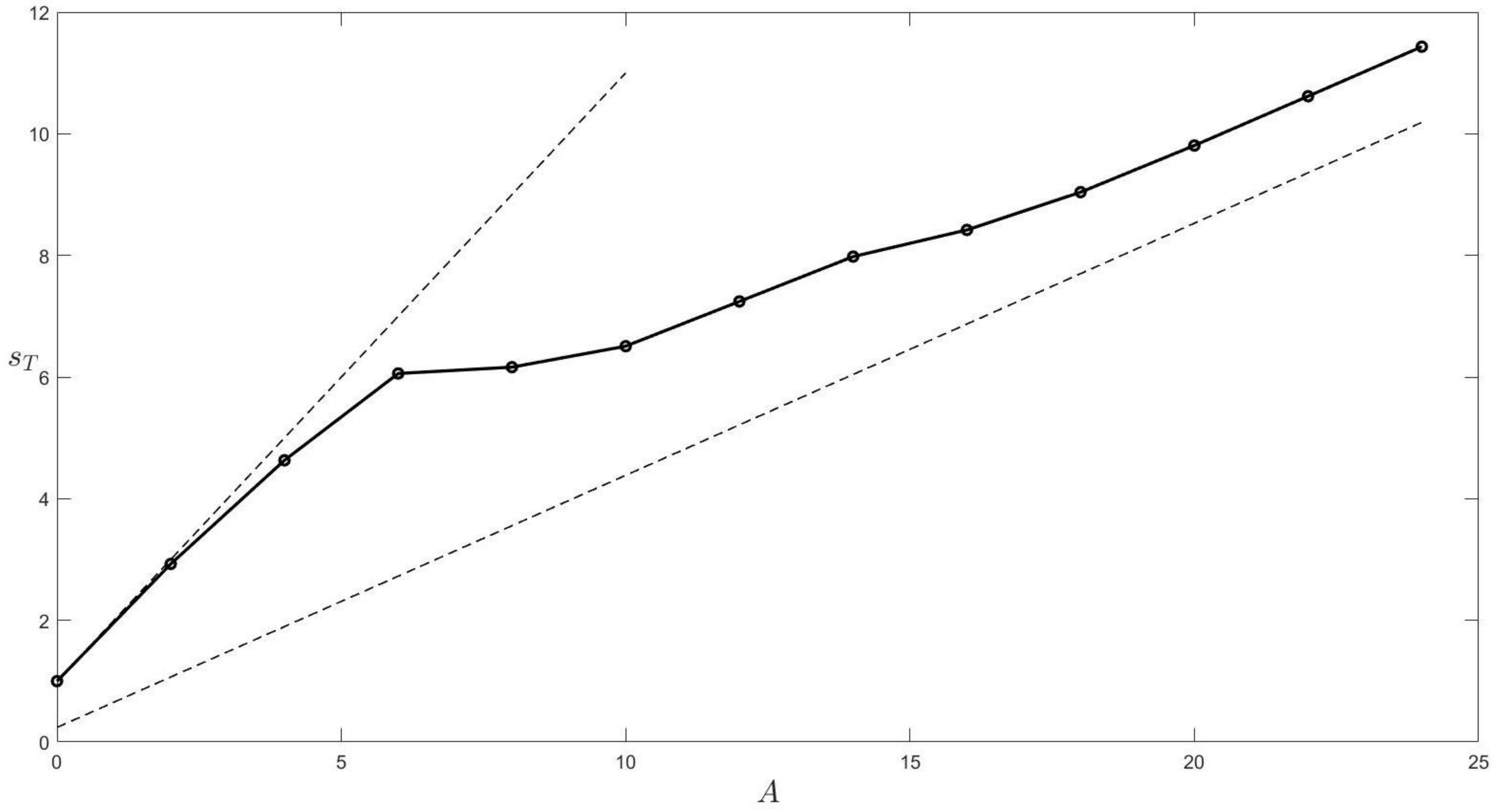}
\caption{Plots of turbulent flame speeds $s_T(A)$ evaluated by (\ref{sTa}). The dash lines refer to the upper and lower bounds in (\ref{sTbd}). Top: ABC flow. Bottom: Kolmogorov flow.}
\label{sTplot}
\end{figure}

\section{Concluding Remarks and Future Works}
\label{sec:5}

While periodic orbits exist for both ABC flow and Kolmogorov flow, they are quite different qualitatively. The ABC flow is known for existence of ``vortex tubes'' as bundled trajectories in axial directions \cite{DFG86}. In fact the positive and negative regions in \cref{Xmap} refer to the cross section of the vortex tubes in $\pm x$-direction respectively. For Kolmogorov flow however, there are two vortex tubes in each direction. Also the vortex tubes of Kolmogorov flow are much ``thinner'' and more ``twisted'' than the vortex tubes of ABC flow. This structural difference contributes to the maximal (sub-maximal) growth of eddy diffusivity in the ABC (Kolmogorov) flow as the molecular diffusivity tends to zero \cite{WXZ}. The streamlines are chaotic outside the vortex tubes, suggesting that the Kolmogorov flow is more disordered. 

Much more remains to be investigated in these two prototypical chaotic flows. For G-equation, we plan to develop a semi-Lagrangian scheme to compute 
the laminar term yet still efficient in computation and accessible with WENO method. Also we shall develope a semi-Lagrangian computation of other 
G-equation models \cite{P00,LXY13} where flame stretching or mean curvature of the level surface appears as a non-constant laminar speed.


\end{document}